\def\E{\frac{1}{T}\int_{\frac{5}{8}T}^{\frac{7}{8}T}E(U(x,t),B_{\lambda't}(0))\,dt}
\def\En{\frac{1}{T_n}\int_{\frac{5}{8}T_n}^{\frac{7}{8}T_n}E(U(x,t),B_{\lambda't}(0))\,dt}
\newtheorem{thm}{Theorem}[section]
\newtheorem{lemm}{Lemma}[section]
\numberwithin{equation}{section}
\begin{document}

\author{J. Nahas}
\institute{J. Nahas
\at 
\'Ecole Polytechnique F\'ed\'erale de Lausanne \\
MA B1 487\\ 
CH-1015 Lausanne
\\ \email{joules.nahas@epfl.ch}
}


\journalname{Calculus of Variations}

\title{Scattering of wave maps from $\mathbb R^{2+1}$ to general targets}

\maketitle

\begin{abstract}
We show that smooth, radially symmetric wave maps $U$ from $\mathbb R^{2+1}$ to a compact target manifold $N$, where $\partial_r U$ and $\partial_t U$ have compact support for any fixed time, scatter. The result will follow from the work of 
Christodoulou and Tahvildar-Zadeh, and Struwe, upon proving that for $\lambda' \in (0,1)$, energy does not 
concentrate in the set $$K_{\frac{5}{8}T,\frac{7}{8}T}^{\lambda'} = \{(x,t) \in \mathbb R^{2+1} \vert \hspace{5pt}  |x| \leq \lambda' t, t \in [(5/8)T,(7/8)T] \}.$$
\end{abstract}

\keywords{wave maps.}
\subclass{58J45, 35L05.}

\section{Introduction}
In this work we consider the initial value problem for wave maps from $\mathbb R^{2+1}$ to a compact target manifold $(N,\langle\cdot,\cdot\rangle)$,
\begin{equation}
\left\{
\begin{array}{c l}
  & \partial_{\alpha}\partial^{\alpha}U=B(U)(\partial_{\alpha}U,\partial^{\alpha}U), \notag \\
& U(x,0)=U_0(x), \partial_tU(x,0)=U_1(x), \textrm{ } x \in \mathbb R^2,
\end{array}
\right.
\label{wv-mp-q}
\end{equation}
where $B$ is the second fundamental form of $(N,\langle\cdot,\cdot\rangle) \hookrightarrow \mathbb R^d$.
Much is known about this system; we refer readers to \cite{MR2233925}, \cite{MR1674843}, and references therein.

Concerning radially symmetric wave maps, Christodoulou and Tahvildar-Zadeh in \cite{MR1223662} proved global well-posedness for smooth wave maps
to targets that satisfied certain bounds on the second fundamental form of geodesic spheres, in addition to being either compact or having bounded structure functions. These results were obtained by showing that energy does not concentrate at the origin, along with pointwise estimates on the fundamental solution to the linear problem.

Struwe in \cite{MR1985457} extended this result to radially symmetric wave maps from $\mathbb R^{2+1}$ to spheres $S^k$, and later in \cite{MR1971037} to general targets, by showing with energy estimates and rescaling, that energy cannot concentrate at the origin. Concerning 
asymptotic behavior for
radially symmetric wave maps, Christodoulou and Tahvildar-Zadeh in \cite{MR1230285} proved pointwise estimates 
which imply scattering
for smooth wave maps that differ from a constant map within a compact set to 
targets satisfying the same conditions as in \cite{MR1223662}.

Let
\begin{equation}
\cos(t \sqrt{-\Delta})f(x) + \frac{\sin(t\sqrt{-\Delta})}{\sqrt{-\Delta}}g(x)
\notag
\end{equation}
denote the solution at time $t$ to the linear wave equation
\begin{equation}
\left\{
\begin{array}{c l}
  & \partial_{\alpha}\partial^{\alpha}U=0, \notag \\
& U(x,0)=f(x), \partial_tU(x,0)=g(x), \textrm{ } x \in \mathbb R^2.
\end{array}
\right.
\notag
\end{equation}
We will use similar methods as in \cite{MR1985457}, \cite{MR1990477}, \cite{MR1971037}, and \cite{MR1230285}
to prove our main result.
\begin{thm}
\label{main}
For a smooth, radially symmetric wave map $U(x,t)$ to a compact target manifold $(N,\langle\cdot,\cdot\rangle) \hookrightarrow \mathbb R^d$ that for each $t$ differs from a constant map within a compact set,
 there exists functions $U_{+,0},U_{+,1}:\mathbb R^{2} \rightarrow \mathbb R^d$ such that
\begin{equation}
\lim_{t \rightarrow \infty}\left \|U(x,t)-
\cos(t \sqrt{-\Delta})U_{+,0}(x) - \frac{\sin(t\sqrt{-\Delta})}{\sqrt{-\Delta}}U_{+,1}(x) \right \|_{\dot{H}^1}=0.
\notag
\end{equation}
\end{thm}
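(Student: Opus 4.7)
The plan is to reduce Theorem \ref{main} to the energy non-concentration statement advertised in the abstract---that $\lim_{T\to\infty}\E = 0$ for every $\lambda'\in(0,1)$---and then establish this non-concentration by a rescaling-and-contradiction argument in the spirit of \cite{MR1985457} and \cite{MR1971037}. For the reduction, Struwe's $\varepsilon$-regularity for radial wave maps into general targets \cite{MR1971037}, combined with the interior pointwise decay estimates of Christodoulou--Tahvildar-Zadeh \cite{MR1230285}, yields (once non-concentration is known) uniform decay of $|\nabla U(x,t)|$ inside every cone $|x|\leq\lambda' t$. Together with the exterior decay furnished by finite propagation speed and the compact-support hypothesis, this lets one define $(U_{+,0},U_{+,1})$ as the $\dot{H}^1\times L^2$ limit of the backwards linear flow applied to $(U(\cdot,t),\partial_t U(\cdot,t))$; the Duhamel error is then shown to vanish in $\dot{H}^1$ by inserting the decay estimates into the nonlinearity $B(U)(\partial_\alpha U,\partial^\alpha U)$.

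For the non-concentration claim I would argue by contradiction: suppose that along some sequence $T_n\to\infty$ one has $\En\geq\varepsilon_0>0$. Since $K^{\lambda'}_{5T/8,7T/8}$ lies strictly inside the light cone, Struwe's energy-flux identity gives that the flux through the mantle of any forward truncated cone enclosing the concentration region tends to zero along the sequence. Rescaling around an appropriate point-and-scale $(x_n,t_n,r_n)$ inside $K^{\lambda'}_{5T_n/8,7T_n/8}$ produces radial wave maps $U_n(x,t)=U(x_n+r_n x,\,t_n+r_n t)$ of uniformly bounded energy on spacetime regions exhausting $\mathbb{R}^{2+1}$, with $\partial_t U_n\to 0$ in $L^2_{\mathrm{loc}}$ as a consequence of the flux-vanishing. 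Passing to a weak limit $U_\infty$, I expect a radial, smooth, finite-energy, time-independent wave map $\mathbb{R}^2\to N$, i.e.\ a radial harmonic map.

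The principal obstacle will be upgrading this convergence enough to guarantee that $U_\infty$ is \emph{nontrivial}, i.e.\ that the $\varepsilon_0$ of concentrated energy actually survives the weak limit. This is where the $\varepsilon$-regularity theorem of \cite{MR1971037} must be applied carefully to turn weak convergence of $\nabla U_n$ in $L^2$ into strong $L^2_{\mathrm{loc}}$ convergence on $\mathbb{R}^{2+1}\setminus\{0\}$; for a general target, as opposed to the sphere $S^k$ treated in \cite{MR1985457}, one must also control the second-fundamental-form term $B(U_n)(\partial_\alpha U_n,\partial^\alpha U_n)$ under the rescaling, which is where the compactness of $N$ enters crucially. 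Once $U_\infty$ is known to be nontrivial, a Pohozaev/conformal-invariance identity in two dimensions rules out radial nonconstant finite-energy harmonic maps into a compact target, producing the contradiction and closing the argument.
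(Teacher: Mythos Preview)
Your overall architecture matches the paper's: prove \eqref{dcy} by a rescaling-and-contradiction argument, then feed \eqref{dcy} into the Christodoulou--Tahvildar-Zadeh pointwise bounds (Theorem~\ref{c-t-z}) and close with a Duhamel estimate on $B(U)(\partial_\alpha U,\partial^\alpha U)$ in $L^1_tL^2_x$. The scattering half of your sketch is essentially what the paper does.

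There are, however, two concrete gaps in the non-concentration half. First, the hypothesis $\En\geq\varepsilon_0$ is only a \emph{time-averaged} lower bound; before you can pick a scale $r_n$ and invoke $\varepsilon$-regularity you need a \emph{pointwise} lower bound $E(U(\cdot,t),B_{\lambda't}(0))\geq\alpha\eta$ for every $t\in[\tfrac58 T_n,\tfrac78 T_n]$. The paper isolates this as a separate lemma (Lemma~\ref{lwr-nrg-bnd}), proved by iterating the monotonicity \eqref{nrgy-cnsrv} across a geometric grid of times and using the annular decay \eqref{nnr-cn-dc} to control leakage; without it you cannot consistently define the trapping radius $R(t)$ with $\eta_0<E(U,B_{6R(t)})<2\eta_0$ nor guarantee that the rescaled lower bound survives. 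Second, your mechanism for $\partial_t U_n\to 0$ in $L^2_{\mathrm{loc}}$ is misattributed. A flux identity on a forward cone controls $|\partial_t U+\partial_r U|^2$ on the mantle, not $|\partial_t U|^2$ in the interior; there is no blow-up vertex here to run Struwe's backward-cone flux argument. The paper instead uses the Christodoulou--Tahvildar-Zadeh averaged kinetic decay \eqref{kntc-nrgy-dc}, then a Vitali covering of $\bigcup_n[\tfrac58 T_n,\tfrac78 T_n]$ by intervals $(t_l-R_l,t_l+R_l)$ plus a pigeonhole contradiction to extract a subsequence with $\tfrac{1}{R_l}\int_{t_l-R_l}^{t_l+R_l}\!\int|U_t|^2\to 0$. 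This step, together with $R_l/t_l\to 0$ from \eqref{nnr-cn-dc}, is what makes the rescaled domains exhaust $\mathbb{R}^2$ and the limit time-independent.

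Your endgame (nontrivial radial finite-energy harmonic map from $\mathbb{R}^2$, ruled out) is a valid way to reach the contradiction; the paper instead invokes Struwe's packaged Theorem~\ref{s}, which internalizes exactly that argument and concludes directly that energy on compacta vanishes, contradicting the trapped lower bound \eqref{nrgy-trp-2}.
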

 
In Section 2, we review the work done on radially symmetric wave maps, with emphasis on results which we will use to prove Theorem \ref{main}, in Section 3.
We use the following notation. Let
\begin{equation}
K_{S,T}^{\lambda} =\{(x,t) \in \mathbb R^{2+1} \vert \hspace{5pt}  |x| \leq \lambda t, t \in [S,T]\}.
\notag
\end{equation}
Energy will be denoted by
\begin{equation}
E(U(x,t),S) = \int_{S} \langle \partial_{\alpha}U,\partial_{\alpha}U \rangle \,dx.
\notag
\end{equation}
With $r=|x|$, we will denote light cone coordinates as $u=t-r$, $v=t+r$. The statement '$a \lesssim b$' will mean
the quantity $a$ is less than $b$ multiplied by a fixed constant.

\section{A brief review of radially symmetric wave maps} 
We will prove our main result by showing that energy does not concentrate in the 
set $K_{\frac{5}{8}T,\frac{7}{8}T}^{\lambda'}$. Scattering will then follow by the work of Struwe in \cite{MR1971037}, and
Christodoulou and Tahvildar-Zadeh in \cite{MR1230285}. We briefly describe these results here.

In \cite{MR1230285}, the authors prove a series of energy estimates, which are then used
in a bootstrap argument. We mention two in particular that will be used later. For $0 < \lambda' < \lambda'' < 1$ (see page 37 of \cite{MR1230285}),
\begin{equation}
\lim_{t \rightarrow \infty}E(U(x,t),B_{\lambda''t}(0) \setminus B_{\lambda't}(0))=0,
\label{nnr-cn-dc}
\end{equation}
and (see page 39 of \cite{MR1230285})
\begin{equation}
\lim_{T \rightarrow \infty}\frac{1}{T}\int \int_{K_{T/2,T}^{\lambda'}}|U_t|^2 = 0.
\label{kntc-nrgy-dc}
\end{equation}
Their bootstrap argument hinges on the Bondi energy decaying for large $u$,
\begin{equation}
\mathcal{E}(u) \equiv \int_{u}^{\infty}r|\partial_vU|^2\,dv
\rightarrow 0 \textrm{ as }u \rightarrow \infty.
\end{equation} 
In order to control $\mathcal{E}(u)$,
define (see \cite{MR1230285}, page 34)
\begin{equation}
\mathcal{E}_{\lambda'}(u)
\equiv
\int_{((1+\lambda')/(1-\lambda'))u}^{\infty}
2r|\partial_{v}U|^2\,dv,
\end{equation}
which will approach $0$ as $u \rightarrow \infty$, and observe
that for $u=(1-\lambda')t$ (ibid, page 43),
\begin{equation}
\frac{1}{T}\int_{\frac{5}{8}T}^{\frac{7}{8}T}\mathcal{E}(u)\,dt
=\frac{1}{T}\int_{\frac{5}{8}T}^{\frac{7}{8}T}[\mathcal{E}_{\lambda'}(u)
+E(U(x,t),B_{\lambda't}(0))]\,dt.
\notag
\end{equation}
By using assumptions on the second fundamental form of geodesic spheres of $N$, along with energy estimates, the authors show (ibid, page 42)
\begin{equation}
\lim_{T \rightarrow \infty}\frac{1}{T}\int_{\frac{5}{8}T}^{\frac{7}{8}T}E(U(x,t),B_{\lambda't}(0))\,dt=0,
\label{dcy}
\end{equation}
which implies the necessary decay on $\mathcal{E}(u)$.

This is the only place where the bounds on the second fundamental form come into play.
The rest of the paper is a bootstrap argument
that proves the main result, 
\begin{thm}
Let $\mathcal{C}_u^+$ (resp. $\mathcal{C}_u^-$) be the interior of the future (resp. past) light cone
with vertex at $(t=u, r=0)$ in $M=\mathbb R^{2,1}$.
For a smooth, radially symmetric wave map $U$ that satisfies \eqref{dcy}, there holds for $u>0$ and some $c>0$,
\begin{equation}
\textrm{diam}(U(\mathcal{C}_u^+)) \leq \frac{c}{\sqrt{u}}.
\notag
\end{equation} 
\end{thm}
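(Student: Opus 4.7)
The plan is to parlay the hypothesis \eqref{dcy} into decay of the Bondi energy $\mathcal{E}(u)$, then upgrade qualitative decay to the quantitative rate $\mathcal{E}(u)\lesssim 1/u$, and finally deduce the pointwise diameter bound by integrating $\partial U$ along characteristics.

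First I would exploit the identity recorded in the excerpt,
\[
\frac{1}{T}\int_{\frac{5}{8}T}^{\frac{7}{8}T}\mathcal{E}(u)\,dt
=\frac{1}{T}\int_{\frac{5}{8}T}^{\frac{7}{8}T}\bigl[\mathcal{E}_{\lambda'}(u)+E(U(x,t),B_{\lambda't}(0))\bigr]\,dt,
\]
noting that $\mathcal{E}_{\lambda'}(u)$ tends to $0$ by its defining integral together with \eqref{nnr-cn-dc}, while the interior energy vanishes in average by \eqref{dcy}. A mean-value argument on $t\in[5T/8,7T/8]$ then produces a sequence $u_n\to\infty$ along which $\mathcal{E}(u_n)\to 0$. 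Differentiating $\mathcal{E}$ in $u$ and using the radial wave map equation in null coordinates, the error from $B(U)(\partial_\alpha U,\partial^\alpha U)$ is controlled by quantities that themselves vanish in the limit (via compactness of $N$), giving approximate monotonicity of $\mathcal{E}$ and hence $\mathcal{E}(u)\to 0$ as $u\to\infty$.

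The next and most delicate step is the quantitative bootstrap. Writing the equation for $\partial_v(rU)$, integrating along outgoing null rays, and bounding the nonlinearity via $|B(U)|\le C$ should yield a differential inequality schematically of the form
\[
u\,\mathcal{E}'(u)+\mathcal{E}(u)\lesssim \mathcal{E}(u)^{1+\varepsilon},
\]
which, once $\mathcal{E}$ is sufficiently small, self-improves to $\mathcal{E}(u)\lesssim 1/u$. The hard part will be taming the logarithmic factor that arises when estimating $\int|\partial_v U|\,dv$ from $\int r|\partial_v U|^2\,dv$: the factor $(\log((v_2-u)/(v_1-u)))^{1/2}$ is marginal, and absorbing it requires restricting the integration range carefully and using the special geometric structure of the wave-map nonlinearity, namely that $B(U)(\partial_\alpha U,\partial^\alpha U)$ is normal to $TN$ so that its contraction against $\partial U$ drops an order. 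This is where the weaker hypotheses on the second fundamental form of geodesic spheres used in \cite{MR1230285} actually enter.

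Finally, for the diameter bound I would pick two points $(u_1,v_1),(u_2,v_2)\in\mathcal{C}_u^+$ and connect them by a broken path of at most two null segments. On a null segment at fixed $u$, Cauchy--Schwarz gives
\[
|U(u,v_2)-U(u,v_1)|\le\Bigl(\int_{v_1}^{v_2}r|\partial_v U|^2\,dv\Bigr)^{\!1/2}\Bigl(\int_{v_1}^{v_2}\frac{dv}{r}\Bigr)^{\!1/2},
\]
and an analogous bound at fixed $v$ uses \eqref{nnr-cn-dc} together with energy conservation. Choosing the break point so that the inner integral $\int dv/r$ stays $O(1)$ and inserting the rate $\mathcal{E}(u)\lesssim 1/u$ from the bootstrap yields $\mathrm{diam}(U(\mathcal{C}_u^+))\lesssim 1/\sqrt{u}$, which is the desired conclusion.
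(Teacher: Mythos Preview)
The paper does not supply its own proof of this theorem; it is quoted from Christodoulou--Tahvildar-Zadeh \cite{MR1230285}, and the surrounding discussion in Section~2 makes clear that the present paper's contribution is to verify the hypothesis \eqref{dcy} for general compact targets (Section~3), after which the bootstrap of \cite{MR1230285} is invoked as a black box. Your outline is broadly faithful to the structure of that original argument: extract qualitative decay of $\mathcal{E}(u)$ from \eqref{dcy} via the averaging identity and a mean-value/monotonicity step, bootstrap to a quantitative rate, then integrate along null segments with Cauchy--Schwarz to bound the diameter.

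There is, however, a conceptual slip in your middle step. You write that taming the logarithmic loss ``is where the weaker hypotheses on the second fundamental form of geodesic spheres used in \cite{MR1230285} actually enter.'' This is the opposite of what the paper asserts: the paragraph immediately preceding the theorem states explicitly that the geodesic-sphere bounds are used \emph{only} to establish \eqref{dcy}, and that ``this is the only place where the bounds on the second fundamental form come into play. The rest of the paper is a bootstrap argument.'' The sole geometric inputs needed in the bootstrap are the orthogonality $B(U)(\partial_\alpha U,\partial^\alpha U)\perp T_UN$, which holds for any isometric embedding, and the bound $|B|\le b$, which holds since $N$ is compact. Indeed, the whole point of stating the theorem with \eqref{dcy} as a standalone hypothesis is that the target-specific assumptions have already been discharged. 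If your version of the bootstrap genuinely requires the geodesic-sphere conditions to close, then either you are missing a cancellation that \cite{MR1230285} exploits in the null-frame energy identities, or the schematic inequality $u\mathcal{E}'(u)+\mathcal{E}(u)\lesssim \mathcal{E}(u)^{1+\varepsilon}$ is not the correct mechanism; in either case your argument as written would not cover general compact $N$, which is what the theorem claims.
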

\noindent
along with the two following estimates that we require.
\begin{lemm}
For a smooth, radially symmetric wave map $U$ that satisfies \eqref{dcy}, and has derivatives $\partial_t U$ and $\partial_r U$ at $t=0$ with compact support, 
there exists $u_0 > 0$, and $c>0$ so that for $u > u_0$,
\begin{equation}
|\partial_v U | \leq \frac{c}{v^{\frac{3}{2}}},
|\partial_u U | \leq \frac{c}{v^{\frac{1}{2}}u}.
\notag
\end{equation}
\label{c-t-z-2}
\end{lemm}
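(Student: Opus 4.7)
The plan is to derive the two pointwise decay estimates by setting up the radial wave map equation in null coordinates $(u,v)$, symmetrizing to absorb the degenerate $\frac{1}{r}$ coefficient, and then running a bootstrap argument fueled by the diameter bound from the preceding theorem together with the energy decay \eqref{dcy}. Writing $r = (v-u)/2$, the radially symmetric wave map equation takes the schematic form
\begin{equation}
\partial_u\partial_v U + \frac{1}{2r}\bigl(\partial_u U - \partial_v U\bigr) = Q(U)(\partial_u U,\partial_v U), \notag
\end{equation}
where $Q$ encodes the second fundamental form of $N$ and satisfies $|Q(U)(\partial_u U,\partial_v U)| \lesssim |\partial_u U|\,|\partial_v U|$. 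Introducing a weighted variable of the form $\phi = r^{1/2} U$ converts this into an equation whose principal part is non-degenerate up to the axis $r=0$ and whose right-hand side carries the explicit weights that will eventually produce the claimed powers of $v$.

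Next I would represent $\partial_v U$ and $\partial_u U$ by integrating the rewritten equation along the null characteristics. The assumption that $\partial_t U$ and $\partial_r U$ are compactly supported at $t=0$ ensures that, for $u$ large enough, incoming characteristics reach $t=0$ outside the support of the initial data, so the representation reduces to an integral of the nonlinearity along the appropriate ray together with a known boundary contribution from the axis. The diameter bound $\textrm{diam}(U(\mathcal{C}_u^+)) \leq c/\sqrt{u}$ from the preceding theorem supplies pointwise control on $U$, so that $Q(U)$ can be treated as a multiplier whose relevant size is controlled by $1/\sqrt{u}$.

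The bootstrap then proceeds by postulating estimates of the form $|\partial_v U| \leq C v^{-3/2}$ and $|\partial_u U| \leq C v^{-1/2} u^{-1}$ on a region $u_0 \leq u \leq u_\ast$, and using the characteristic integrals to re-derive these same bounds with a strictly smaller constant, allowing $u_\ast$ to be pushed to infinity. The weight $r^{1/2} \sim v^{1/2}$ is what produces the exponent $3/2$ for $\partial_v U$; integration of $\partial_v(\partial_u U)$ along outgoing rays issuing from the axis $r=0$, combined with the diameter estimate at the vertex of the cone, produces the $u^{-1}$ factor for $\partial_u U$. The Bondi-type quantity $\int_u^\infty r |\partial_v U|^2 \, dv$, which is controlled via \eqref{dcy} and the identity on page 43 of \cite{MR1230285}, provides the integrable smallness that absorbs the nonlinear self-interaction in the iteration.

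The main obstacle is closing the bootstrap simultaneously for both quantities, since the right-hand side $Q(U)(\partial_u U,\partial_v U)$ couples them: any slack in the bound on $\partial_u U$ immediately weakens the bound on $\partial_v U$ and conversely. Care is required in choosing $u_0$ so large that the $1/\sqrt{u_0}$ factor from the diameter bound forces all nonlinear contributions to be strictly below the bootstrap constant, and in verifying that the symmetrizing weight $r^{1/2}$ exactly cancels the $\frac{1}{r}$ degeneracy at the axis so that the boundary terms produced at $r=0$ are finite and controlled by the diameter estimate rather than by the a priori unknown pointwise size of $\partial_u U$ there.
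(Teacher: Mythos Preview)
The paper does not supply a proof of this lemma at all: Lemma~\ref{c-t-z-2} is stated in Section~2 as one of the ``two following estimates that we require'' imported directly from Christodoulou--Tahvildar-Zadeh \cite{MR1230285}, and the present paper invokes it as a black box in the derivation of Theorem~\ref{c-t-z}. There is therefore no in-paper argument to compare your proposal against.

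That said, your outline is broadly faithful to the strategy in \cite{MR1230285}: rewriting the radial equation in null coordinates, introducing the $r^{1/2}$ weight to tame the $1/r$ singularity, representing $\partial_u U$ and $\partial_v U$ via characteristic integrals, and closing a coupled bootstrap using the smallness furnished by the Bondi energy decay and the diameter bound. One point to tighten: the diameter estimate $\mathrm{diam}(U(\mathcal{C}_u^+))\le c/\sqrt{u}$ controls the oscillation of $U$, not the size of $Q(U)$, which is bounded simply by compactness of $N$; the $u^{-1/2}$ gain you need comes instead from the decay of $\mathcal{E}(u)$ and from the characteristic integration itself. Also, as written your sketch remains a plan rather than a proof---the actual closure of the bootstrap in \cite{MR1230285} involves a careful partition of the integration region and several intermediate flux quantities that you would need to reproduce to make the argument complete.
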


\begin{lemm}
Let $U$ be a smooth, radially symmetric wave map  that satisfies \eqref{dcy}, and has derivatives $\partial_t U$ and $\partial_r U$ at $t=0$ with compact support
in a ball of radius $R$ centered at the origin. Let $(t_0,r_0)$ be a fixed point, $r_0 > R$. Then there exists a continuous, increasing function $c(u)$ such that
\begin{align}
\sup_{r \geq r_0}r^{3/2}|\partial_{v}U(u+r,r)| & \leq c(u), \notag \\
\sup_{r \geq r_0}r^{1/2}|\partial_{u}U(u+r,r)| & \leq c(u). \notag
\end{align}
\label{c-t-z-3}
\end{lemm}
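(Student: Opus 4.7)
The plan is to split the bound according to whether $u > u_0$ or $u \leq u_0$. In the former regime Lemma~\ref{c-t-z-2} applies directly: since $v = u+2r \geq 2r$ whenever $u \geq 0$, it yields
\[
r^{3/2}|\partial_v U| \;\leq\; c(r/v)^{3/2} \;\leq\; c\cdot 2^{-3/2}, \qquad r^{1/2}|\partial_u U| \;\leq\; c(r/v)^{1/2}u^{-1} \;\leq\; c/(\sqrt{2}\,u_0),
\]
so a uniform constant works on $u \geq u_0$.

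For $u \in [-R, u_0]$ the plan is to propagate bounds via integral equations along null characteristics. The support hypothesis on $(\partial_t U, \partial_r U)|_{t=0}$ together with finite speed of propagation forces $U$ to be constant on $\{u < -R\}$, so $\partial_v U = \partial_u U = 0$ there. The radially symmetric wave-map equation in null coordinates reads
\[
\partial_u \partial_v U \;=\; -\tfrac{1}{4r}(\partial_v U - \partial_u U) - B(U)(\partial_u U, \partial_v U);
\]
integrating in $u'$ from $-R$ at fixed $v$ produces an integral representation of $\partial_v U(u,v)$, and a parallel equation for $\partial_u U(u,v)$ arises from integration in $v'$ at fixed $u$, with initial data supplied either by smoothness on $t = 0$ (when $u \leq 0$) or by the axis condition $\partial_u U = \partial_v U$ at $r = 0$ (when $u > 0$).

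Setting $G_v(u) := \sup_{r \geq r_0} r^{3/2}|\partial_v U(u+r,r)|$ and $G_u(u) := \sup_{r \geq r_0} r^{1/2}|\partial_u U(u+r,r)|$, multiplying the integral representations by $r^{3/2}$ and $r^{1/2}$ respectively, taking suprema over $r \geq r_0$, and using compactness of $N$ to bound $|B(U)|$ uniformly, one obtains a coupled Gronwall-type estimate
\[
G_v(u) + G_u(u) \;\leq\; A(u) + \int_{-R}^u K(u,u')\bigl(G_v(u') + G_u(u')\bigr)\,du',
\]
with $A$ continuous in $u$ and $K$ integrable. Gronwall's inequality then produces a continuous, increasing bound $c(u)$ on $[-R, u_0]$, which we glue continuously to the uniform bound on $u \geq u_0$ by passing to the running maximum.

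The main obstacle will be closing the Gronwall iteration with compatible weights: the $1/r$ kernel in the wave-map equation has to interact favorably with the weights $r^{3/2}$ and $r^{1/2}$, and one must check that $K(u,u')$ is integrable in $u'$ uniformly as $v$ (equivalently $r$) tends to infinity along the characteristic. Handling the quadratic term $B(U)(\partial_u U, \partial_v U)$ is the other delicate point: one factor should carry the weighted $L^\infty$ estimate while the other is controlled by smoothness on the compact spacetime region the characteristic $u = \text{const}$ traverses before entering the tail where Lemma~\ref{c-t-z-2} takes over.
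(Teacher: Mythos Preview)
The paper does not prove this lemma. Lemma~\ref{c-t-z-3} is stated in Section~2 as one of the estimates imported from Christodoulou and Tahvildar-Zadeh \cite{MR1230285}; the paper only \emph{uses} it (together with Lemma~\ref{c-t-z-2}) to assemble Theorem~\ref{c-t-z}. So there is no in-paper argument to compare your proposal against.

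That said, your outline is broadly in the spirit of \cite{MR1230285}: splitting at $u_0$ and invoking Lemma~\ref{c-t-z-2} for $u>u_0$ is exactly how the paper itself glues the two lemmas into Theorem~\ref{c-t-z}, and for $u\in[-R,u_0]$ the method of integrating the null-form system along characteristics, using the vanishing of the data for $u<-R$, and closing by a Gronwall/bootstrap is indeed how those authors proceed. Two remarks on the sketch. First, the naive pair $(r^{3/2}\partial_v U,\,r^{1/2}\partial_u U)$ does not close directly against the $1/r$ transport term when you integrate the $\partial_u$-equation in $v$; the standard fix is to pass to the variables $r^{1/2}\partial_v U$ and $r^{1/2}\partial_u U$ (which diagonalize the linear $1/r$ piece up to a harmless cross term) and only afterwards recover the extra $r$ on $\partial_v U$ from the $u$-integration, where $r'\ge r$ along the characteristic. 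Second, your Gronwall is genuinely two-step rather than a single inequality in $u$: the $v$-integration at fixed $u$ bounds $G_u(u)$ in terms of $G_v(u)$ at the \emph{same} $u$ (plus data on the compact region $\{r\le r_0,\ -R\le u\le u_0\}$), while the $u$-integration at fixed $v$ bounds $G_v(u)$ by $\int_{-R}^{u}G_u(u')\,du'$; substituting one into the other is what produces an honest Gronwall in $u$. Once these two points are made precise, the argument goes through.
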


We can combine the estimates in Lemma \ref{c-t-z-2} and Lemma \ref{c-t-z-3} to obtain estimates on the derivatives of $U$ for all $r$, and $t \ge 0$. For fixed $u_0>0$, and large enough $r$ with $t-r \leq u_0$, we have 
\begin{equation}
r^2 \pm 2rt+t^2+1 \leq r^2+4r^2+4r^2+r^2,
\notag
\end{equation}
which implies
\begin{equation}
\frac{1}{r} \lesssim \frac{1}{\sqrt{(t \pm r)^2+1}}.
\label{cmb-stm}
\end{equation}
We can use
the estimates in Lemma \ref{c-t-z-2} when $t-r >  u_0$, then use Lemma \ref{c-t-z-3} combined with \eqref{cmb-stm} when $t-r \leq  u_0$  and $r \ge r_0$. The only region when $t \ge 0$ that is not covered in this dichotomy is bounded in time, which can be handled with the local existence theory. With this, we have the following theorem. 
\begin{thm}
For a smooth, radially symmetric wave map $U$ that satisfies \eqref{dcy}, and has derivatives $\partial_t U$ and $\partial_r U$ at $t=0$ with compact support, there is a $c$ such that for $t \ge 0$,
\begin{equation}
|\partial_v U | \leq \frac{c}{(v^2+1)^{\frac{3}{4}}},\textrm{ and }
|\partial_u U | \leq \frac{c}{(v^2+1)^{\frac{1}{4}}\sqrt{u^2+1}}.
\notag
\end{equation}
\label{c-t-z}
\end{thm}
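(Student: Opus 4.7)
The paragraph preceding the theorem already sketches the argument; my plan is to make it rigorous via a three-region case analysis of $\{t \ge 0\}$. Fixing the $u_0 > 0$ supplied by Lemma \ref{c-t-z-2} and the $r_0 > R$ supplied by Lemma \ref{c-t-z-3}, I would split the half-plane into (I) $u > u_0$; (II) $u \le u_0$ and $r \ge r_0$; and (III) $u \le u_0$ and $r \le r_0$. Before entering the case analysis, I would invoke finite speed of propagation: since $\partial_t U(\cdot,0)$ and $\partial_r U(\cdot,0)$ are supported in $B_R(0)$, the map $U$ is constant outside the forward solid light cone over $B_R(0)$, so $\partial_u U \equiv \partial_v U \equiv 0$ on $\{u < -R\}$. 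Hence in Region (II) I may assume $u \in [-R,u_0]$, a compact interval.

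In Region (I), Lemma \ref{c-t-z-2} applies directly. Since $u > u_0$ and $v \ge u > u_0$, elementary algebra gives that $v^{1/2}$ is comparable to $(v^2+1)^{1/4}$ and that $u$ is comparable to $\sqrt{u^2+1}$, with constants depending only on $u_0$. Substituting these comparabilities into $|\partial_v U| \le c/v^{3/2}$ and $|\partial_u U| \le c/(v^{1/2} u)$ yields the desired inequalities.

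In Region (II), Lemma \ref{c-t-z-3} gives $|\partial_v U| \le c(u)/r^{3/2}$ and $|\partial_u U| \le c(u)/r^{1/2}$; continuity and monotonicity of $c$ together with $u \in [-R,u_0]$ let me replace $c(u)$ by $c(u_0)$. Applying \eqref{cmb-stm}, which is valid precisely when $u \le u_0$ and $r$ is large enough, converts $1/r$ into $1/\sqrt{v^2+1}$; this immediately yields the $\partial_v U$ bound. For $\partial_u U$, the same conversion gives $1/r^{1/2} \lesssim 1/(v^2+1)^{1/4}$, and the missing factor $1/\sqrt{u^2+1}$ is inserted by observing that $\sqrt{u^2+1}$ is bounded above on $[-R,u_0]$ and absorbing its maximum into the constant. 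Region (III) is the compact spacetime set $\{0 \le t \le u_0+r_0,\ 0 \le r \le r_0\}$, on which smoothness of $U$ (guaranteed by local well-posedness) gives uniform bounds on $|\partial_v U|$ and $|\partial_u U|$, while the right-hand sides of the target inequalities are bounded below by positive constants.

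The work is entirely bookkeeping of constants across the three regions; no new estimate is needed. The step most likely to be mishandled is Region (II): the $\sqrt{u^2+1}$ factor in the $\partial_u U$ bound is \emph{not} produced by Lemma \ref{c-t-z-3} or \eqref{cmb-stm} on its own, and can only be inserted because finite speed of propagation confines $u$ to the compact interval $[-R,u_0]$. Without that confinement, the claimed inequality for $\partial_u U$ would be strictly stronger than what Lemmas \ref{c-t-z-2}--\ref{c-t-z-3} and \eqref{cmb-stm} provide.
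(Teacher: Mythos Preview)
Your proposal is correct and follows exactly the three-region decomposition the paper sketches in the paragraph preceding the theorem: Lemma~\ref{c-t-z-2} on $\{u>u_0\}$, Lemma~\ref{c-t-z-3} together with \eqref{cmb-stm} on $\{u\le u_0,\ r\ge r_0\}$, and local existence on the bounded remainder. You are in fact more careful than the paper on the one nontrivial point, namely that in Region~(II) the factor $1/\sqrt{u^2+1}$ in the $\partial_u U$ estimate does not come from \eqref{cmb-stm} but must be inserted by hand using the compactness of $[-R,u_0]$ guaranteed by finite propagation speed; the paper leaves this implicit.
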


In \cite{MR1971037}, it is shown that
energy does not concentrate at the origin at some time $T$, since this is the only obstacle to 
global well-posedness by \cite{MR1223662}. By finite speed of propagation, we may assume $\partial_t U$ and $\partial_r U$ have compact support. Arguing by contradiction, for $\varepsilon_1$ small enough, one finds a radius $R(t)$ such that
\begin{equation}
\varepsilon_1  \leq E(U(x,t),B_{6R(t)}(0)) \leq 2 \varepsilon_1 < \liminf_{t \rightarrow T}E(U(x,t),B_{T-t}(0)),
\notag
\end{equation}
from which it follows that for $|\tau| \leq 5R(t)$,
\begin{equation}
 E(U(x,t+\tau),B_{R(t)}(0)) \leq 2\varepsilon_1, 
\label{sml-nrgy-trp}
\end{equation}
and
\begin{equation}
\varepsilon_1  \leq E(U(x,t+\tau),B_{11R(t)}(0)). 
\label{nrgy-trp-1}
\end{equation}
It can also be shown that 
\begin{equation}
\lim_{t \rightarrow T}R(t)/(T-t)=0.
\label{r-grth-1}
\end{equation}

Using estimates on the kinetic energy, one can find a sequence of intervals $\{(t_l-R(t_l),t_l+R(t_l))\}$ with
$t_l \rightarrow T$ so that
\begin{equation}
\lim_{l \rightarrow \infty }\frac{1}{R(t_l)}\int_{(t_l-R(t_l),t_l+R(t_l))}\left ( \int _{B_{T-t}(0)}|U_t|^2\,dx\right )\,dt = 0.
\notag
\end{equation}
By rescaling with 
$U_l(t,x) \equiv U(t_l+R(t_l) t,R(t_l) x)$,
one obtains a sequence of wave maps $\{U_l\}$
with
\begin{equation}
\lim_{l \rightarrow \infty }\int_{(-1,1)}\left ( \int _{D_l(t)}|\partial_tU_l|^2\,dx\right )\,dt = 0,
\label{rscld-k-dc-1}
\end{equation}
where $D_l(t)=\{x \vert \hspace{5pt} R(t_l)|x|\leq t_l+R(t_l)(T-t) \}$.

With these estimates, it can be shown that $U_l$ converges to a harmonic map $\overline{U}$. Specifically, \eqref{rscld-k-dc-1} shows that $\overline{U}$ satisfies a harmonic map equation, 
that $U$ has finite energy implies that $\overline{U}$ must have finite energy,
 and
\eqref{r-grth-1} shows that $\overline{U}$ is a map from all of $\mathbb R^2$
to $N$.
Since $N$ is compact, $\overline{U}$ must be constant. With \eqref{sml-nrgy-trp} and some geometric estimates, one can then show that locally, the energy of $U_l$ tends to $0$ as $l \rightarrow \infty$,
contradicting the lower bound in \eqref{nrgy-trp-1}. In particular, Struwe proved the following result in \cite{MR1971037}.
\begin{thm}
Let $\{U_l\}$ be a sequence of radially symmetric wave maps from $\mathbb R^{2+1}$ to a compact manifold $N$ with total energy uniformly bounded. Let $D_l(t)$ be
a family of subsets of $\mathbb R^2$ that obeys $\limsup_{l \rightarrow \infty}D_l(t)=\mathbb R^2$ and, together with $\{U_l\}$, satisfies \eqref{rscld-k-dc-1}.
Then for $\varepsilon$ small enough, and if for $t \in [-1,1]$, 
\begin{equation}
E(U_l(x,t),B_{1}(0))
< \varepsilon,
\notag
\end{equation}
the energy of $U_l$ on any compact set approaches $0$ as $l \rightarrow \infty$. 
\label{s}
\end{thm}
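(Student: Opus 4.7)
The plan is to extract a weak subsequential limit $\overline{U}$ of $\{U_l\}$ in $\dot{H}^1_{\mathrm{loc}}(\mathbb{R}^{2+1})$, identify $\overline{U}$ as a radially symmetric weakly harmonic map from all of $\mathbb{R}^2$ to $N$ of finite energy, argue that such an object must be constant, and finally upgrade weak convergence to convergence of energies on compact sets by means of an $\varepsilon$-regularity principle for wave maps. Once $\overline{U}$ is known to be constant, its energy vanishes and $E(U_l,K)\to 0$ follows for every compact $K$.

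For the first step, I would pass to a subsequence so that $U_l\rightharpoonup \overline{U}$ weakly in $\dot{H}^1_{\mathrm{loc}}$ and strongly in $L^2_{\mathrm{loc}}$. The kinetic decay \eqref{rscld-k-dc-1}, together with $\limsup_l D_l(t)=\mathbb{R}^2$, forces $\partial_t \overline{U}\equiv 0$, so $\overline{U}$ depends only on $x$. To conclude that $\overline{U}$ satisfies the harmonic map equation one must pass the nonlinearity $B(U_l)(\partial_\alpha U_l,\partial^\alpha U_l)$ through the limit, which requires strong convergence of the gradients; this is supplied by the small-energy assumption $E(U_l(\cdot,t),B_1(0))<\varepsilon$ on $[-1,1]$ together with an $\varepsilon$-regularity lemma producing uniform higher-derivative estimates on shrunken subcylinders. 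Radial symmetry of $U_l$ is inherited by $\overline{U}$, and the uniform total-energy bound gives $\overline{U}$ finite energy.

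Next I would show that $\overline{U}$ is constant. Writing $s=\log r$, the harmonic map equation for a radial map from $\mathbb{R}^2$ becomes the geodesic equation $\partial_{ss}\overline{U}=B(\overline{U})(\partial_s\overline{U},\partial_s\overline{U})$ for $s\in\mathbb{R}$, and the energy transforms as $\int_0^\infty r|\partial_r\overline{U}|^2\,dr=\int_{-\infty}^{\infty}|\partial_s\overline{U}|^2\,ds<\infty$. Since the speed $|\partial_s\overline{U}|$ is constant along a geodesic in $N$, integrability over all of $\mathbb{R}$ forces $\partial_s\overline{U}\equiv 0$, so $\overline{U}$ is a constant map.

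The main obstacle is the passage from weak to strong $\dot{H}^1_{\mathrm{loc}}$ convergence, needed both to identify $\overline{U}$ and to conclude. The $\varepsilon$-regularity step is delicate because in $\mathbb{R}^{2+1}$ the energy is a critical quantity; one exploits the radial symmetry to reduce the analysis to a $1+1$-dimensional problem and then runs an energy bootstrap in the spirit of \cite{MR1985457} and \cite{MR1990477}. Once uniform regularity is available on every compact subset (extended from $B_1(0)\times[-1,1]$ by finite speed of propagation and the energy estimates of Section~2), Arzel\`a--Ascoli together with the identification of $\overline{U}$ as a constant map yields $E(U_l,K)\to 0$ for any compact $K$.
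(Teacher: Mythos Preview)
Your outline is essentially the argument the paper sketches (the theorem itself is quoted from Struwe \cite{MR1971037} rather than proved in the paper): pass to a weak subsequential limit, use the kinetic decay \eqref{rscld-k-dc-1} together with $\limsup_l D_l(t)=\mathbb{R}^2$ to force $\partial_t\overline{U}=0$, identify $\overline{U}$ as a finite-energy radial harmonic map from $\mathbb{R}^2$, and conclude constancy by your geodesic argument in $s=\log r$; then upgrade to strong local convergence via the small-energy hypothesis and $\varepsilon$-regularity. This is exactly the skeleton the paper describes in the paragraph preceding the statement.

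The one step that is not right as written is the last. You propose to extend uniform control from $B_1(0)\times[-1,1]$ to arbitrary compact sets ``by finite speed of propagation and the energy estimates of Section~2.'' Finite speed of propagation goes the wrong way here---it confines influence to \emph{smaller} light cones, it does not enlarge the region of control---and the Section~2 estimates concern the original, un-rescaled map $U$, not the sequence $U_l$. In Struwe's argument the passage to larger balls is carried by the radial structure, what the paper calls ``some geometric estimates'': the elementary oscillation bound
\[
|U_l(R,t)-U_l(\rho,t)|^2 \le \log(R/\rho)\int_\rho^R r\,|\partial_r U_l|^2\,dr,
\]
combined with the strong convergence on $B_1$ to the constant $p$, the vanishing of $\partial_t U_l$ in $L^2_{\mathrm{loc}}$, and the fact that any concentrating bubble would again be a finite-energy radial harmonic map (hence constant, by the same geodesic argument), is what rules out energy concentration on $B_R\setminus B_1$ and yields $E(U_l,K)\to 0$ for every compact $K$. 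With that mechanism in place of finite propagation speed, your plan matches the paper's.
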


\section{Proof of main result}
With the results from the previous section,
we prove Theorem \ref{main}.
Using Theorem \ref{s}, we will show \eqref{dcy}, then use this fact to apply Theorem \ref{c-t-z}.

We argue the decay of energy by contradiction.
Suppose that for all $\lambda'\in (0,1)$, it is not true that
\[\lim_{T \rightarrow \infty}\E = 0.\]
Since energy is positive and bounded, there is some $\lambda'\in (0,1)$ so that
\begin{equation}
\limsup_{T \rightarrow \infty}\E = \eta,
\label{dfn-lmb}
\end{equation}
where $0 < \eta < \infty$.
Pick $\{T_n\}_{n \in \mathbb N}$ such that $T_n \rightarrow \infty$, and 
\begin{equation}
\lim_{n \rightarrow \infty}\En = \eta.
\end{equation}

In order to produce the sequence $U_l$ in Theorem \ref{s}, we require the lower bound in \eqref{nrgy-trp-1}, which we now prove. By energy conservation (for $t < t'$ and $R>0$), 
\begin{equation}
 E(U(x,t),B_{R}(0)) \leq E(U(x,t'),B_{R+(t'-t)}(0)),
\label{nrgy-cnsrv}
\end{equation}
any energy that enters or leaves
$K^{\lambda'}_{\frac{5}{8}T,\frac{7}{8}T}$ must pass through the surrounding region.
By \eqref{nnr-cn-dc}, energy just outside $K^{\lambda'}_{\frac{5}{8}T,\frac{7}{8}T}$ must decay with time. This keeps energy from rapidly fluxuating in $K^{\lambda'}_{\frac{5}{8}T,\frac{7}{8}T}$, so after sufficient time, the energy at a fixed time in $K^{\lambda'}_{\frac{5}{8}T,\frac{7}{8}T}$ must stay away from $0$. This
argument is formalized in the following lemma.

\begin{lemm}
 Fix $\lambda'' \in (\lambda',1)$. There is an $\alpha = \alpha(\lambda',\lambda'')\in (0,1)$ such that for large enough $n$,
$t \in [\frac{5}{8}T_n,\frac{7}{8}T_n]$, it follows that 
$$\alpha \eta < E(U(x,t),B_{\lambda't}(0)).$$
\label{lwr-nrg-bnd}
\end{lemm}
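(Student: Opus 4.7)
Set $f(t) = E(U(x,t), B_{\lambda' t}(0))$ and $g(t) = E(U(x,t), B_{\lambda'' t}(0) \setminus B_{\lambda' t}(0))$; by \eqref{nnr-cn-dc}, $g(t)\to 0$ as $t\to\infty$. The strategy is to show that the oscillation of $f$ on $[\tfrac{5}{8}T_n, \tfrac{7}{8}T_n]$ is $o(1)$ as $n\to\infty$. Combined with $\En \to \eta$ and the fact that this interval has length $T_n/4$, the average of $f$ on it tends to $4\eta$, so its infimum does too, and the desired bound $f(t) > \alpha\eta$ then holds for any fixed $\alpha < 4$ -- in particular for any $\alpha \in (0,1)$ -- once $n$ is large.

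The main step is a two-sided local comparison: for $s \leq t$ with $t/s \leq \rho := \min\bigl\{(1-\lambda')/(1-\lambda''),\,(1+\lambda'')/(1+\lambda')\bigr\} > 1$, I claim that $|f(s) - f(t)| \leq g(s) + g(t)$. The bound $f(s) \leq f(t) + g(t)$ follows from applying \eqref{nrgy-cnsrv} with $R = \lambda's$: it yields $f(s)\leq E(U(x,t),B_{\lambda's+(t-s)}(0))$, and $\lambda's + (t-s) \leq \lambda''t$ is equivalent to $t/s \leq (1-\lambda')/(1-\lambda'')$. The reverse bound $f(t) \leq f(s) + g(s)$ uses the dual monotonicity $E(U(x,t),B_R(0)) \leq E(U(x,s),B_{R+(t-s)}(0))$ for $s < t$, which is the same stress-energy-flux identity underlying \eqref{nrgy-cnsrv} but applied to the past truncated cone with vertex $(0,(1+\lambda')t)$; taking $R = \lambda' t$ gives $f(t) \leq E(U(x,s),B_{(1+\lambda')t - s}(0))$, and $(1+\lambda')t - s \leq \lambda''s$ is equivalent to $t/s \leq (1+\lambda'')/(1+\lambda')$.

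Any two points of $[\tfrac{5}{8}T_n, \tfrac{7}{8}T_n]$ have ratio at most $7/5$, so a chain of at most $K := \lceil \log(7/5)/\log \rho\rceil$ intermediate times with consecutive ratios bounded by $\rho$ links any two endpoints. Iterating the local comparison,
\[
\sup_{t,t' \in [\tfrac{5}{8}T_n,\tfrac{7}{8}T_n]}|f(t) - f(t')| \;\leq\; 2K \sup_{u \geq \tfrac{5}{8}T_n} g(u) \;=\; o(1) \quad \text{as } n\to\infty
\]
by \eqref{nnr-cn-dc}. Combined with the average of $f$ over the interval tending to $4\eta$, this gives $\inf_{t\in [\tfrac{5}{8}T_n,\tfrac{7}{8}T_n]}f(t) \geq 4\eta - o(1)$, proving the lemma for any $\alpha\in(0,4)$. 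The only care required is setting up the two directions of the energy-flux comparison, since \eqref{nrgy-cnsrv} is quoted only in its expanding-cone form; the chaining and the passage from integral average to pointwise infimum are then routine.
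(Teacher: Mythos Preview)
Your argument is correct, and it rests on the same geometric chaining idea as the paper's proof: step through $[\tfrac{5}{8}T_n,\tfrac{7}{8}T_n]$ in multiplicative increments governed by the ratios $(1+\lambda'')/(1+\lambda')$ and $(1-\lambda')/(1-\lambda'')$, and control the change of $f(t)=E(U(\cdot,t),B_{\lambda't})$ at each step by $g(t)=E(U(\cdot,t),B_{\lambda''t}\setminus B_{\lambda't})$, which vanishes by \eqref{nnr-cn-dc}. Your observation that \eqref{nrgy-cnsrv} is only one of the two needed cone monotonicity inequalities is apt; the paper silently uses the forward-cone version as well (its bound $\sup_{t_l\le t\le t_{l+1}}f(t)\le f(t_l)+g(t_l)$ for the forward chain requires exactly the dual direction you write down).

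The execution, however, is genuinely different. The paper argues by contradiction: it posits a point $\tau_n$ with $f(\tau_n)\le\alpha\eta$, chains outward in both time directions to obtain $f(t_l)\le\alpha\eta+|l|\beta\eta$, integrates, and sums the resulting geometric series to force the averaged energy below $(1-\gamma)\eta$. Your route is more direct: you bound the oscillation of $f$ on the whole interval by $2K\sup_{u\ge\tfrac{5}{8}T_n}g(u)=o(1)$, then combine with the fact that the true average $\tfrac{4}{T_n}\int_{5T_n/8}^{7T_n/8}f\,dt$ tends to $4\eta$ to conclude $\inf f\ge 4\eta-o(1)$. This is shorter, avoids the explicit summation, and in fact yields the stronger conclusion that any $\alpha<4$ works, not merely some $\alpha\in(0,1)$. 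The paper's formulation, on the other hand, makes the dependence of the threshold on $\lambda',\lambda''$ explicit through $N_\pm,q_\pm$, which is not needed downstream but is informative.
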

\begin{proof}
From \eqref{nnr-cn-dc}, we can pick $n$ big enough so that 
$$E(U(x,t),B_{\lambda''t}(0) \setminus B_{\lambda't}(0)) < \beta \eta,$$ for $\beta=\beta(\lambda',\lambda'')$ to be chosen later and $t \ge \frac{5}{8}\frac{1-\lambda''}{1-\lambda'}T_n$. For perhaps even larger $n$, 
we can have that 
\begin{equation}
\left |\En - \eta \right | < \gamma \eta, \label{contradict-me}
\end{equation}
for $\gamma=\gamma(\lambda',\lambda'') \in (0,1)$ which we will specify below.

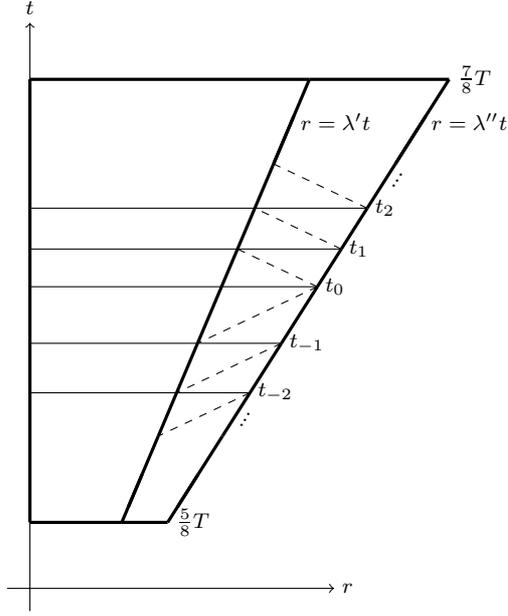
\begin{figure}
    \beginpgfgraphicnamed{Fig1}
\begin{center}
\begin{tikzpicture}[domain=0:4]

\draw[->] (-.3,2) -- (4,2) node[right] {$r$};

\draw[->] (0,1.7) -- (0,9.5) node[above] {$t$};


\draw[very thick] (0,2.875) --(1.2075,2.875); 
 
\draw[very thick] (1.2075,2.875) --(1.81125,2.875)node[right] {$\frac{5}{8} T$}; 
 
\draw[very thick] (0,8.75) --(3.675,8.75); 
 
\draw[very thick] (3.675,8.75) --(5.5125,8.75)node[right] {$\frac{7}{8} T$}; 
 
\draw[very thick] (0,2.875) --(0,8.75); 
 
 
\draw[dashed] (3.78,6) --(2.73,6.5); 
 
 
 \draw[very thick] (2.52,6) --(2.73,6.5); 
 
 
 \draw[very thick] (3.78,6) --(4.095,6.5); 
 
\draw (0,6.5) --(4.095,6.5)node[right] {$t_1$}; 
 
\draw[dashed] (4.095,6.5) --(2.9575,7.04167); 
 
 
 \draw[very thick] (2.73,6.5) --(2.9575,7.04167); 
 
 
 \draw[very thick] (4.095,6.5) --(4.43625,7.04167); 
 
\draw (0,7.04167) --(4.43625,7.04167)node[right] {$t_2$}; 
 
\draw[dashed] (4.43625,7.04167) --(3.20396,7.62847); 
 
 
 \draw[very thick] (2.9575,7.04167) --(3.20396,7.62847); 
 
 
 \draw[very thick] (4.43625,7.04167) --(4.80594,7.62847)node[sloped, below, midway] {$\hspace{10pt} ...$}; 
 
 
 \draw[very thick] (3.20396,7.62847) --(3.47096,8.26418); 
 
 
 \draw[very thick] (4.80594,7.62847) --(5.20643,8.26418); 
 
 
 \draw[very thick] (3.20396,7.62847) --(3.675,8.75)node[midway, right] {$r=\lambda' t$}; 
 
 
 \draw[very thick] (4.80594,7.62847) --(5.5125,8.75)node[midway, right] {$r=\lambda'' t$}; 
 
\draw (0,6) --(3.78,6)node[right] {$t_{0}$}; 
 
\draw[dashed] (3.78,6) --(2.205,5.25); 
 
 
 \draw[very thick] (2.52,6) --(2.205,5.25); 
 
 
 \draw[very thick] (3.78,6) --(3.3075,5.25); 
 
\draw (0,5.25) --(3.3075,5.25)node[right] {$t_{-1}$}; 
 
\draw[dashed] (3.3075,5.25) --(1.92937,4.59375); 
 
 
 \draw[very thick] (2.205,5.25) --(1.92937,4.59375); 
 
 
 \draw[very thick] (3.3075,5.25) --(2.89406,4.59375); 
 
\draw (0,4.59375) --(2.89406,4.59375)node[right] {$t_{-2}$}; 
 
\draw[dashed] (2.89406,4.59375) --(1.6882,4.01953); 
 
 
 \draw[very thick] (1.92937,4.59375) --(1.6882,4.01953); 
 

 \draw[very thick] (2.89406,4.59375) --(2.5323,4.01953)node[sloped, below, midway] {$...$}; 
 
 
 \draw[very thick] (1.6882,4.01953) --(1.47718,3.51709); 
 
 
 \draw[very thick] (2.5323,4.01953) --(2.21577,3.51709); 
 
 
 \draw[very thick] (1.47718,3.51709) --(1.29253,3.07745); 
 
 
 \draw[very thick] (2.21577,3.51709) --(1.9388,3.07745); 
 
 
 \draw[very thick] (1.47718,3.51709) --(1.2075,2.875); 
 
 
 \draw[very thick] (2.21577,3.51709) --(1.81125,2.875); 
 
\end{tikzpicture}
\endpgfgraphicnamed
\caption{Construction for the sequence $\{t_l\}$. Dashed lines either have slope $1$ or $-1$.} \label{fig1}
\end{center}
\end{figure}

Suppose for some $\tau_n \in [\frac{5}{8}T_n,\frac{7}{8}T_n]$, $E(U(x,\tau_n),B_{\lambda'\tau_n}(0)) \leq \alpha \eta$.
We will show that
\begin{align}
 & \En  = 
\notag \\
& \quad \frac{1}{T_n}\int_{\frac{5}{8}T_n}^{\tau_n}E(U(x,t),B_{\lambda't}(0))\,dt
+\frac{1}{T_n}\int_{\tau_n}^{\frac{7}{8}T_n}E(U(x,t),B_{\lambda't}(0))\,dt
< (1-\gamma) \eta,
\label{energy-split}
\end{align}
which would contradict our assumption \eqref{contradict-me}.
We seperately estimate the two integrals in the middle of \eqref{energy-split}.

Let $\{t_l\}_{l \in \mathbb Z}$ be defined by $t_0 = \tau_n$, $t_l = (\frac{1+\lambda''}{1+\lambda'})^lt_0$ for $l>0$,
and  $t_l = (\frac{1-\lambda''}{1-\lambda'})^{-l}t_0$ for $l < 0$ (see Figure 1).
Let $N_+=N_+(\lambda',\lambda'')$ be the smallest number with $t_{N_+} \ge \frac{7}{8}T_n$, and $N_-=N_-(\lambda',\lambda'')$ the smallest number with $t_{-N_-} \leq \frac{5}{8}T_n$.
Let $q_+=\frac{1+\lambda''}{1+\lambda'}$ and $q_-=\frac{1-\lambda''}{1-\lambda'}$.

We first estimate the integral over $[\tau_n,\frac{7}{8}T_n]$. 
By \eqref{nrgy-cnsrv},
\begin{align}
\sup_{t_{l} \leq t \leq t_{l+1}}E(U(x,t),B_{\lambda't}(0)) 
& \leq E(U(x,t_{l}),B_{\lambda't_{l}}(0))
\notag \\
& \quad +E(U(x,t_{l}),B_{\lambda''t_{l}}(0)\setminus B_{\lambda't_{l}}(0)).
\notag
\end{align}
From this, our bounds on $E(U(x,t_{l}),B_{\lambda''t_{l}}(0)\setminus B_{\lambda't_{l}}(0))$, and our assumption
on $t_0=\tau_n$,
\begin{align}
\sup_{t_{l} \leq t \leq t_{l+1}}E(U(x,t),B_{\lambda't}(0)) 
& \leq 
E(U(x,t_{0}),B_{\lambda't_{0}}(0))
\notag \\
& \quad + 
l\sup_{t_0 \leq t \leq t_l}E(U(x,t),B_{\lambda''t}(0)\setminus B_{\lambda't}(0))
\notag \\
& \leq
\alpha \eta +l \beta \eta.
\notag
\end{align}
Integrating over $[t_l,t_{l+1}]$, 
\begin{equation}
\int_{t_l}^{t_{l+1}}E(U(x,s), B_{\lambda's}(0))\,ds 
\leq 
(q_+-1)q_+^l\tau_n\alpha \eta +l(q_+-1)q_+^l\tau_n\beta \eta.
\label{sm-m}
\end{equation}
With \eqref{sm-m} and an elementary summation formula,
\begin{align}
& \int_{\tau_n}^{\frac{7}{8}T_n}E(U(x,s), B_{\lambda's}(0))\,ds 
 \leq
\sum_{l=0}^{N_+-1}
\int_{t_l}^{t_{l+1}}E(U(x,s), B_{\lambda's}(0))\,ds
\notag \\
& \quad =  
(q_+^{N_+}-1)\tau_n\alpha \eta 
\notag \\
& \quad \quad +
\left [(N_{+}+\frac{q_+}{1-q_+})q_+^{N_+}-\frac{q_+}{1-q_+} \right ]\tau_n\beta \eta
\notag \\
& \quad \leq
(q_+^{N_+}-1)\frac{7}{8}T_n \alpha \eta 
\notag \\
& \quad  \quad +
\left [(N_{+}-\frac{q_+}{1-q_+})q_+^{N_+}+\frac{q_+}{1-q_+} \right ]\frac{7}{8}T_n\beta \eta.
\notag
\end{align}
For the integral over $[\frac{5}{8}T_n,\tau_n]$ in \eqref{energy-split}, we use a similar
argument,
\begin{align}
& \int_{\frac{5}{8}T_n}^{\tau_n}E(U(x,s), B_{\lambda's}(0))\,ds 
 \leq 
\sum_{l=0}^{-N_-+1}
((q_--1)q_-^{-l}\tau_n(\alpha \eta -l(q_--1)q_-^{-l}\tau_n\beta \eta))
\notag \\
& \quad =  
(q_+^{N_-}-1)\tau_n\alpha \eta 
\notag \\
& \quad \quad +
\left [(N_{-}+\frac{q_-}{1-q_-})q_-^{N_-}-\frac{q_-}{1-q_-} \right ]\tau_n \beta \eta
\notag \\
& \quad \leq
(q_-^{N_-}-1)\frac{7}{8}T_n\alpha \eta 
\notag \\
& \quad \quad +
\left [(N_{-}-\frac{q_-}{1-q_-})q_-^{N_-}+\frac{q_-}{1-q_-}
\right ]\frac{7}{8}T_n\beta \eta. 
\notag
\end{align}
Combining these,
\begin{align}
 \frac{1}{T_n}\int_{\frac{5}{8}T_n}^{\tau_n}E(U(x,s), B_{\lambda's}(0))\,ds
& +
\frac{1}{T_n}\int_{\tau_n}^{\frac{7}{8}T_n}E(U(x,s), B_{\lambda's}(0))\,ds
  \leq
\frac{7}{8}(q_-^{N_-}+q_+^{N_+}-2)\alpha \eta
\notag \\
& \quad +\left [(N_{+}-\frac{q_+}{1-q_+})q_+^{N_+}+\frac{q_+}{1-q_+} \right ]\frac{7}{8}\beta \eta 
\notag \\
& \quad +
\left [(N_{-}-\frac{q_-}{1-q_-})q_-^{N_-}+\frac{q_-}{1-q_-} \right ]\frac{7}{8}\beta \eta.
\notag
\end{align}
Choosing $\alpha$, $\beta$, and $\gamma$ appropriately, we have that
\begin{align}
& \frac{7}{8}(q_-^{N_-}+q_+^{N_+}-2)\alpha \eta
+\left [(N_{+}-\frac{q_+}{1-q_+})q_+^{N_+}+\frac{q_+}{1-q_+} \right ]\frac{7}{8}\beta \eta 
\notag \\
& \quad +
\left [(N_{-}-\frac{q_-}{1-q_-})q_-^{N_-}+\frac{q_-}{1-q_-} \right ]\frac{7}{8}\beta \eta
 < (1-\gamma) \eta,
\notag
\end{align}
from which \eqref{energy-split} follows,
which contradicts \eqref{contradict-me}.
\end{proof}

With this lemma, along with Theorem \ref{c-t-z} and Theorem \ref{s}, we now prove Theorem \ref{main}.
\begin{proof}[of Theorem \ref{main}]
To begin with, we reproduce with only slight modification the argument of Struwe in \cite{MR1990477} on page 819. Let $\mathcal{T}=\cup_{n}[\frac{5}{8}T_n,\frac{7}{8}T_n]$, $\lambda'$ be as in \eqref{dfn-lmb}, and pick 
$R(t)$ so that for some sufficiently small $\eta_0$,
\begin{equation}
\eta_0 < E(U(x,t), B_{6 R(t)}(0)) < 2\eta_0 < \inf_{t \in \mathcal{T}}E(B_{\lambda' t}(0),t),
\label{nrg-trp}
\end{equation}
for $t \in \mathcal{T}$. That it is possible to pick such an $R(t)$ for small enough $\eta_0$ follows from Lemma \ref{lwr-nrg-bnd}.
With \eqref{nrg-trp} it can be shown that for $|\tau| < 5R(t)$,
\begin{equation}
\eta_0  \leq E(U(x,t+\tau), B_{11 R(t)}(0)), 
\label{nrgy-trp-2}
\end{equation}
and
\begin{equation}
 E(U(x,t+\tau),B_{R(t)}(0)) \leq 2\eta_0.
\label{sml-nrgy-trp-2}
\end{equation}

Since the intervals $\Lambda_l \equiv (t-R(t),t+R(t))$ cover $\mathcal{T}$, by Vitali's theorem we may select a countable, disjoint family $\{(t_l-R(t_l),t_l+R(t_l))\}_{l \in \mathbb N}=\{\Lambda_l\}_{l \in \mathbb N}$ such that
\begin{equation}
\mathcal{T} \subset \bigcup_{l \in \mathbb N}(t_l-5R(t_l),t_l+5R(t_l)).
\notag
\end{equation}
Let $R(t_l)=R_l$ and $\{(t_l-5R_l,t_l+5R_l)\}_{l \in \mathbb N}=\{\Lambda_l^*\}_{l \in \mathbb N}$. By possibly taking a subsequence and reordering, we may further assume that $t_l \rightarrow \infty$ and $t_l < t_{l+1}$.
Since $\lim_{t \rightarrow \infty}E(U(x,t), B_{\lambda''t}(0)\setminus B_{\lambda't}(0))=0$ for all $0 < \lambda' < \lambda'' < 1$,
 we have 
\begin{equation}
\lim_{l \rightarrow \infty} \frac{R_l}{t_l}=0. 
\label{r-dc-f}
\end{equation}

In order to show that there is a subsequence $\{t_{l_m}\}$ of $\{t_{l}\}$ with
\begin{equation}
\lim_{m \rightarrow \infty} \frac{1}{R_{l_m}}\int_{\Lambda_{l_m}}\int_{B_t(0)}|U_t|^2\,dx\,dt = 0,
\label{knt-ngr-dcy-sqn}
\end{equation}
we'll assume to the contrary that there is a $\delta>0$ and $l_0$
such that
\begin{equation}
\int_{\Lambda_l}\int_{B_t(0)}|U_t|^2\,dx\,dt \geq \delta R_l,
\label{knt-ngr-dcy-sqn-cnt}
\end{equation}
for $l\ge l_0$. For large enough $n$ so that $\sup \bigcup_{l < l_0} \Lambda_l^*<\frac{5}{8}T_n$,
let
\begin{align}
l_1 & = \max_k \left \{ k  \vert \hspace{5pt} [\frac{5}{8}T_n,\frac{7}{8}T_n] \subset \bigcup_{l \geq k} \Lambda_l^* \right \} \ge l_0,
\notag \\
\textrm{and }l_2 & = \min_k \left \{ k  \vert \hspace{5pt} [\frac{5}{8}T_n,\frac{7}{8}T_n] \subset \bigcup_{l_1 \leq k} \Lambda_l^* \right \}.
\notag
\end{align}
By taking $l_0$ large enough, we may assume from \eqref{r-dc-f} that
$R_l < \frac{1}{35} t_l$
for $l \geq l_0$.

From the maximality of $l_1$, minimality of $l_2$, and our assumptions on $R_l$, 
\begin{align}
\frac{5}{8}T_n & \leq t_{l_1}+5R_{l_1} \leq \frac{8}{7}t_{l_1},
\notag \\
\textrm{and }\frac{6}{7}t_{l_2} & \leq t_{l_2}-5R_{l_2} \leq \frac{7}{8}T_n.
\label{m-rds-bnd}
\end{align}
Because $\{t_l\}$ is increasing, and \eqref{m-rds-bnd},
we infer that $t_l-5R_l \geq \frac{77}{192}T_n$ and $ \frac{21}{20}T_n \geq t_l +5 R_l$ when $l_1 \leq l \leq l_2$.
It then follows that
\begin{equation}
\bigcup_{l_1 \leq l \leq l_2} \Lambda_l \subset [\frac{77}{192}T_n,\frac{21}{20}T_n].
\notag
\end{equation}
With this, the fact that $[\frac{5}{8}T_n,\frac{7}{8}T_n] \subset \bigcup_{l_1 \leq l \leq l_2} \Lambda_l^*$, and \eqref{knt-ngr-dcy-sqn-cnt}, we have that
\begin{align}
 \frac{\delta}{4}T_n & \leq \delta \sum_{l_1 \leq l \leq l_2}\textrm{diam}\Lambda_l^*  =
10\delta \sum_{l_1 \leq l \leq l_2}R_l 
\leq 10 \sum_{l_1 \leq l \leq l_2}
\int_{\Lambda_l}\int_{B_t(0)}|U_t|^2\,dx\,dt
\notag \\
& =
 10 
\int_{\bigcup_{l_1 \leq l \leq l_2}\Lambda_l}\int_{B_t(0)}|U_t|^2\,dx\,dt
\leq
10 
\int_{K_{\frac{77}{192}T_n}^{\frac{21}{20}T_n}}\int_{B_t(0)}|U_t|^2\,dx\,dt.
\notag
\end{align}
For big enough $T_n$, this contradicts \eqref{kntc-nrgy-dc}, thereby proving \eqref{knt-ngr-dcy-sqn}. For notational convenience, we refer to the subsequence satisfying \eqref{knt-ngr-dcy-sqn} as $\{t_l\}$.

Rescale with 
$U_l(t,x)=U(t_l+R_l t,R_l x)$ so that
\begin{equation}
\int_{-1}^{1}\int_{D_l(t)} |\partial_t U_l|^2 \,dx \,dt \rightarrow 0,
\label{ke-vanish}
\end{equation}
with
\begin{equation}
D_l(t)=\{x \vert \hspace{5pt} R_l|x| \leq \lambda' (t_l+R_l t)\}.
\notag
\end{equation}

From \eqref{sml-nrgy-trp-2}, \eqref{ke-vanish}, and \eqref{r-dc-f}, Theorem \ref{s} applies, so that locally the energy of $U_l$ decays as $l \rightarrow \infty$, which contradicts 
\eqref{nrgy-trp-2}.
 Therefore 
\begin{equation}
\lim_{T \rightarrow \infty}\E = 0.
\label{nrg-vnshs}
\end{equation}
With \eqref{nrg-vnshs}, we can now apply Theorem \ref{c-t-z} to show that $U$ scatters. From an integral formulation of \eqref{wv-mp-q},
\begin{align}
U(x,t)& =
\cos(t \sqrt{-\Delta})U_{0}(x) + \frac{\sin(t\sqrt{-\Delta})}{\sqrt{-\Delta}}U_{1}(x) \notag \\
& \quad -\int_{0}^{t}\frac{\sin((t-\tau)\sqrt{-\Delta})}{\sqrt{-\Delta}}B(U)(\partial_{\alpha}U,\partial^{\alpha}U)\,d\tau,
\notag
\end{align}
it is easy to see that when $U$ scatters,
\begin{align}
U_{+,0}(x) & =U_{0}(x)+\int_{0}^{\infty}\frac{\sin(\tau\sqrt{-\Delta})}{\sqrt{-\Delta}}B(U)(\partial_{\alpha}U,\partial^{\alpha}U)\,d\tau,
\notag \\
\textrm{and} 
\notag \\
U_{+,1}(x) & =U_{1}(x)-\int_{0}^{\infty}\cos(\tau\sqrt{-\Delta})B(U)(\partial_{\alpha}U,\partial^{\alpha}U)\,d\tau.
\notag
\end{align}

Therefore to prove scattering, it will suffice to show by energy estimates that 
\begin{equation}
\left \| \int_{0}^{\infty}\frac{\sin(\tau\sqrt{-\Delta})}{\sqrt{-\Delta}}B(U)(\partial_{\alpha}U,\partial^{\alpha}U)\,d\tau \right \|_{\dot{H}^1}
 \lesssim
\| |B(U)(\partial_{\alpha}U,\partial^{\alpha}U)| \|_{L_t^{1}L_x^2} 
\notag  
\end{equation}
is finite. 
Since $N$ is compact and $B$ is bilinear and symmetric, we may assume that there is a $b \in \mathbb R^{+}$ so that
\begin{equation}
|B(U)(\partial_{\alpha}U,\partial^{\alpha}U)|= 2|B(U)(\partial_{u}U,\partial_{v}U)|
\leq 
b|\partial_{u} U||\partial_{v}U|.
\notag
\end{equation}
Using Theorem \ref{c-t-z}, and the fact that for positive $r$ and $t$, 
\begin{equation}
\frac{1}{(r+t)^2+1}  \leq \min \left \{\frac{1}{r^2+1},\frac{1}{t^2+1} \right \},
\notag
\end{equation}
we have that
\begin{align}
& \| |B(U)(\partial_{\alpha}U,\partial^{\alpha}U)| \|_{L_t^{1}L_x^2} 
 \leq
b\int_{0}^{\infty}\left (
\int_{0}^{\infty}
|\partial_uU|^2 |\partial_vU|^2r\,dr
\right )^{\frac{1}{2}}\,dt
\notag \\
& \quad \leq
bc\int_{0}^{\infty}\left (
\int_{0}^{\infty}\frac{r}{((r+t)^2+1)^2((r-t)^2+1)}\,dr
\right )^{\frac{1}{2}}\,dt
\notag \\
& \quad \leq
bc\int_{0}^{\infty}(t^2+1)^{-3/4}\left (
\int_{0}^{\infty}\frac{r}{\sqrt{r^2+1}((r-t)^2+1)}\,dr
\right )^{\frac{1}{2}}\,dt
< \infty.
\label{sct}
\end{align}
By \eqref{sct}, it follows that $U$ scatters.

\end{proof}

\noindent
\textbf{Acknowledgments:} I would like to thank Joachim Krieger and
Sohrab Shahshahani for helpful discussions, and the referee for valuable comments.

\end{document}